\theoremstyle{plain}
\newtheorem{theorem}[equation]{Theorem}
\newtheorem{corollary}[equation]{Corollary}
\newtheorem{lemma}[equation]{Lemma}
\newtheorem{proposition}[equation]{Proposition}
\theoremstyle{definition}
\newcommand{\IC}{\mathbb{C}}
\newcommand{\IR}{\mathbb{R}}
\newcommand{\tr}{\mathrm{tr}}
\newcommand{\Tr}{\mathrm{Tr}\,}
\renewcommand{\deg}{\mathrm{deg}}
\def\d/{/\mspace{-6.0mu}/}
\newcommand{\p}{\partial}
\begin{document}

\title{Spectral asymmetry for manifolds of special holonomy}
\author{Mark Stern \footnote{Duke University, Department of Mathematics;  
 stern@math.duke.edu. The author was partially supported by NSF grant DMS-0504890.}
}

\date{\today}

\maketitle

\setcounter{section}{0}

\medskip
\section{Introduction}

Let $M^7$ be a compact $G_2$ manifold. Let $\phi$ be the defining covariant constant $3$-form. Then the $2-$forms on $M$ decompose into irreducible $G_2$ representations  
$$\Lambda^2T^*M = \Lambda_7 \oplus \Lambda_{14},$$
where $\Lambda_m$ has an $m$ dimensional fiber. Let $P_m$ denote the projection onto $\Lambda_m$. The Laplace Beltrami operator, $\Delta$, commutes with $P_m$. Denote by $\Delta_m$ the corresponding restriction of $\Delta$ to the image of $P_m$. Let $E_{m,\lambda}$ denote the $\lambda$ eigenspace of $\Delta_m$. Set 
$$N_m(x) = \sum_{0<\lambda\leq x}\text{dim}(E_{m,\lambda}).$$
As the fiber of $\Lambda_7$ is half the dimension of $\Lambda_{14}$, one expects 
$N_{14}(x)$ to grow at roughly twice the rate of $N_7(x)$.  Define 
$$\zeta_m(s) = \sum_{\lambda\in spec^+(\Delta_m)}\text{dim} (E_{m,\lambda})\lambda^{-s},$$
and set 
$$\zeta_{\delta}(s) = 2\zeta_{7}(s) - \zeta_{14}(s).$$
Here $spec^+(\Delta_m)$ denotes the nonzero spectrum of $\Delta_m$. 
As is well known (see, for example, \cite[Chapter 13]{Sh}), $\zeta_m(s)$ has a meromorphic extension to the entire complex plane with at most simple poles and is analytic for $Re(s) > \frac{7}{2}$. 
In this elementary note, we will prove the following result.
\begin{theorem}\label{th1}
The function $\zeta_{\delta}(s)$ admits an analytic extension to $Re(s) > \frac{3}{2}$. It has a simple pole at $s=\frac{3}{2}$ with residue 
$$res(\zeta_\delta)(3/2) = \frac{4}{9\pi^{2}}\int_M p_1(M)\wedge \phi,$$
where $p_1(M)$ denotes the first Pontriagin form of $TM$.  
\end{theorem}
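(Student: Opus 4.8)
The plan is to read off the residues of the $\zeta_m$ from the small-time heat expansion. Each $\Delta_m$ is a Laplace-type operator on the bundle $\Lambda_m$, which is parallel since $\nabla\phi=0$; hence $\Tr e^{-t\Delta_m}\sim\sum_{k\ge0}a_k(\Delta_m)\,t^{(k-7)/2}$ as $t\downarrow0$, with $a_{2j+1}=0$, and from $\Gamma(s)\zeta_m(s)=\int_0^\infty t^{s-1}\bigl(\Tr e^{-t\Delta_m}-\dim\Ker\Delta_m\bigr)\,dt$ one sees that $\zeta_m$ has a simple pole at $s=(7-k)/2$ with residue $a_k(\Delta_m)/\Gamma\!\bigl((7-k)/2\bigr)$. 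It therefore suffices to show
\[
2a_0(\Delta_7)=a_0(\Delta_{14}),\qquad 2a_2(\Delta_7)=a_2(\Delta_{14})
\]
(so the candidate poles of $\zeta_\delta$ at $s=\tfrac72$ and $s=\tfrac52$ are absent) and then to compute $2a_4(\Delta_7)-a_4(\Delta_{14})$; since $\Gamma(\tfrac32)=\tfrac12\sqrt\pi$, the required residue is $\tfrac2{\sqrt\pi}\bigl(2a_4(\Delta_7)-a_4(\Delta_{14})\bigr)$.

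Two structural facts drive everything: the $G_2$ metric is Ricci-flat, and the Riemann curvature operator satisfies $\mathcal{R}=P_{14}\,\mathcal{R}\,P_{14}$ on $\Lambda^2$ (Ambrose--Singer, using $\mathrm{Hol}\subseteq G_2$ and $\mathfrak{g}_2\cong\Lambda_{14}$), so that the curvature tensor lies pointwise in the irreducible $77$-dimensional $G_2$-module $\mathcal{W}\subset\mathrm{Sym}^2\Lambda_{14}$. Write $\Delta_m=\nabla^*\nabla-E_m$, where $E_m$ is the block on $\Lambda_m$ of the Weitzenböck curvature $\mathfrak{q}(R)$ of the Hodge Laplacian on $2$-forms. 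The first two cancellations are then immediate: $a_0(\Delta_m)=(4\pi)^{-7/2}\rk(\Lambda_m)\vol(M)$ and $\rk(\Lambda_{14})=2\rk(\Lambda_7)$; and because $\tau=0$, $a_2(\Delta_m)=(4\pi)^{-7/2}\tfrac16\int_M\tr_{\Lambda_m}E_m$, where moreover $E_7=0$ — the $G_2$-equivariant map $\mathcal{W}\to\mathrm{Sym}^2\Lambda_7$ sending $R$ to the symmetric endomorphism $E_7$ vanishes, because $\mathrm{Sym}^2\Lambda_7\cong\mathrm{Sym}^2\IR^7$ does not contain $\mathcal{W}$. Hence $\zeta_\delta$ is analytic for $\mathrm{Re}(s)>\tfrac32$.

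For the pole at $s=\tfrac32$ I would substitute $\Delta_m=\nabla^*\nabla-E_m$ into Gilkey's universal formula for $a_4$ of a Laplace-type operator on a bundle of rank $r$ with curvature $\Omega$,
\[
360(4\pi)^{7/2}a_4=\int_M\tr\bigl(60\,\Delta E+60\tau E+180E^2+30\,\Omega_{ij}\Omega_{ij}\bigr)+r\!\int_M\bigl(12\Delta\tau+5\tau^2-2|\rho|^2+2|R|^2\bigr).
\]
On our manifold $\tau=\rho=0$, the $\Delta E$ and $\Delta\tau$ terms integrate to zero, and in $2a_4(\Delta_7)-a_4(\Delta_{14})$ the purely gravitational term is multiplied by $2\rk(\Lambda_7)-\rk(\Lambda_{14})=0$. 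Using $E_7=0$ (so $\tr_{\Lambda^2}\mathfrak{q}(R)^2=\tr_{\Lambda_{14}}E_{14}^2$), what remains is
\[
360(4\pi)^{7/2}\bigl(2a_4(\Delta_7)-a_4(\Delta_{14})\bigr)=\int_M\Bigl[-180\,\tr_{\Lambda^2}\mathfrak{q}(R)^2+30\bigl(2\tr\Omega_7^2-\tr\Omega_{14}^2\bigr)\Bigr].
\]
The second term is elementary: $\Lambda_7$ is the standard $7$-dimensional $\mathfrak{g}_2$-module and $\Lambda_{14}\cong\mathfrak{g}_2$ the adjoint one, so $2\tr\Omega_7^2-\tr\Omega_{14}^2=(2-\kappa)\sum_{i,j}\tr_{\IR^7}R(e_i,e_j)^2=(\kappa-2)|R|^2$, where $\kappa=4$ is the ratio of the Killing form of $\mathfrak{g}_2$ to the trace form of $\IR^7$ (so this term equals $2|R|^2$). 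For the first term the key point is that every quadratic $G_2$-invariant functional of $R\in\mathcal{W}$ is a scalar multiple of $|R|^2$, since $\mathcal{W}$ is absolutely irreducible and hence $\mathrm{Sym}^2\mathcal{W}$ contains the trivial representation exactly once; concretely $E_{14}=\mathfrak{q}(R)|_{\Lambda_{14}}=c\,\mathcal{R}$ for a constant $c$, so $\tr_{\Lambda^2}\mathfrak{q}(R)^2=c^2\tr\mathcal{R}^2=\tfrac14c^2|R|^2$. Therefore $2a_4(\Delta_7)-a_4(\Delta_{14})=c_0(4\pi)^{-7/2}\int_M|R|^2\,d\mathrm{vol}$ for a universal constant $c_0$, computed from Gilkey's coefficients, the value $\kappa=4$, and the value of $c$ (fixed, e.g., by the eigenvalues $+2$ and $-1$ of $\omega\mapsto*(\phi\wedge\omega)$ on $\Lambda_7$ and $\Lambda_{14}$).

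Finally one identifies $\int_M|R|^2\,d\mathrm{vol}$ with a multiple of $\int_M p_1(M)\wedge\phi$. With $\psi=*\phi$, the Chern--Weil representative $p_1(M)=-\tfrac1{8\pi^2}\tr_{\IR^7}(\Omega\wedge\Omega)$ is quadratic in $\mathcal{R}$, and $p_1(M)\wedge\phi=\langle p_1(M),\psi\rangle\,d\mathrm{vol}$; thus $R\mapsto\langle p_1(M),\psi\rangle$ is again a $G_2$-invariant quadratic functional of $R\in\mathcal{W}$, hence equal to $c_1|R|^2$ for a nonzero constant $c_1$, which one reads off from the eigenvalue on $\Lambda_{14}$ of the invariant pairing $(\eta,\zeta)\mapsto\langle\eta\wedge\zeta,\psi\rangle$ on $\Lambda^2$ together with $\mathcal{R}=P_{14}\,\mathcal{R}\,P_{14}$; here the normalization of $\phi$ (so that $|\phi|^2=7$ and $\phi\wedge\psi=7\,d\mathrm{vol}$) enters. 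Combining, $2a_4(\Delta_7)-a_4(\Delta_{14})=\tfrac{c_0}{c_1}(4\pi)^{-7/2}\int_M p_1(M)\wedge\phi$, and after multiplying by $2/\sqrt\pi$ the accumulated constants collapse to $4/(9\pi^2)$, which is the assertion. I expect the main obstacle to be exactly this constant-chasing: applying Gilkey's $a_4$-formula with internally consistent conventions, evaluating the several $G_2$-representation traces (the $2$-form Weitzenböck curvature and its square, the $\mathfrak{g}_2\subset\mathfrak{so}(7)$ form ratio, the action of $\psi$ on $\Lambda^2$), and pinning down the normalization of $\phi$; the conceptual content — that all quadratic curvature invariants in sight collapse to the single one $|R|^2$, which in turn is a fixed multiple of the Pontryagin density paired with $\phi$ — is forced by the irreducibility of the $G_2$ curvature module $\mathcal{W}$.
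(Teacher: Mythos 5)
Your route is genuinely different from the paper's, and structurally it is viable. The paper never computes heat coefficients of $\Delta_7$ and $\Delta_{14}$ separately: it encodes $2\zeta_7-\zeta_{14}$ as a single weighted trace $\Tr \ast e(\phi)\,e^{-t\Delta}$, using that $\ast e(\phi)$ acts as $+2$ on $\Lambda_7$ and $-1$ on $\Lambda_{14}$, and then runs a local index-theory argument: Clifford-degree counting kills every coefficient of order lower than $t^{-3/2}$, and Mehler's formula produces the leading coefficient directly as the characteristic form $\tfrac13 p_1\wedge\phi$, with the numerical factor coming out of the rescaling for free. You instead take Gilkey's universal $a_0,a_2,a_4$ for the two Laplace-type operators and let $G_2$-representation theory do the bookkeeping: the curvature lies pointwise in the irreducible $77$-dimensional module, so $E_7=0$ by Schur, the trace-form ratio $\tr_{14}(X^2)=4\,\tr_7(X^2)$ handles the $\Omega$-terms, and uniqueness of the invariant quadratic form collapses the $a_4$-difference to a universal multiple of $\int_M|R|^2$, which the same uniqueness argument identifies with a multiple of $\int_M p_1\wedge\phi$. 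That last identification is essentially Joyce's proof of the nonpositivity quoted in Corollary \ref{BC1}, so your scheme delivers the sign statement en route; the paper's scheme delivers the exact constant and extends verbatim to the twisted and $Spin(7)$ cases.

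The gap is exactly where you predicted, but for this theorem it is a real one: the statement being proved is the value $\tfrac{4}{9\pi^{2}}$, and your argument as written only shows that the residue at $s=\tfrac32$ equals a universal constant times $\int_M p_1\wedge\phi$; none of the constants ($c$ in $E_{14}=c\,\mathcal{R}|_{\Lambda_{14}}$, the Gilkey normalizations, $\kappa$, $c_1$, the normalization of $\phi$) is actually evaluated and assembled. Moreover, the one concrete recipe you offer for fixing $c$ --- ``by the eigenvalues $+2$ and $-1$ of $\ast e(\phi)$'' --- is not a mechanism that determines it: those eigenvalues concern the algebraic operator $\ast e(\phi)$, not the ratio between the Weitzenb\"ock curvature on $\Lambda_{14}$ and the curvature operator; $c$ must come from the explicit $2$-form Weitzenb\"ock formula in the Ricci-flat case, or from evaluation on an explicit algebraic curvature tensor. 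Two smaller points: for the absence of the pole at $s=\tfrac52$ you need $\int_M\tr_{\Lambda_{14}}E_{14}=0$ in addition to $E_7=0$ (immediate from Schur or Ricci-flatness, but it should be said), and $c_1\neq 0$ deserves its one-line justification (the curvature $2$-forms lie in $\Lambda_{14}$, where the pairing $\eta\mapsto \ast(\phi\wedge\eta)$ is $-1$, so the integrand is a definite multiple of $|R|^2$). With the constants actually computed, your outline would be a complete alternative proof; as it stands it establishes the analytic continuation and the proportionality of the residue to $\int_M p_1\wedge\phi$, but not the stated coefficient.
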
 
The integral $\int_M p_1(M)\wedge \phi$ is nonpositive and vanishes if and only if $M$ is flat. (See \cite[Proposition 10.2.6]{j}). Hence we have the following corollary. 
\begin{corollary}\label{BC1}
$$res(\zeta_\delta)(3/2) \leq 0,$$
with equality if and only if $M$ is flat.
\end{corollary}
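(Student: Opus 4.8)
The plan is to derive the corollary from Theorem~\ref{th1}, which already identifies $res(\zeta_\delta)(3/2)$ with $\tfrac{4}{9\pi^{2}}\int_M p_1(M)\wedge\phi$; so the only remaining point is that $\int_M p_1(M)\wedge\phi\le 0$, with equality exactly when $M$ is flat --- the fact quoted above from \cite[Proposition~10.2.6]{j}. I would prove this by rewriting $\int_M p_1(M)\wedge\phi$, via Chern--Weil theory together with the $G_2$ holonomy reduction, as a manifestly nonpositive multiple of $\int_M|R|^2\,\mathrm{dvol}$, where $|R|^2=\sum_{abcd}R_{abcd}^2$ is the squared norm of the Riemann curvature.

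Concretely: fix a local orthonormal coframe $e^1,\dots,e^7$, let $\Omega=(\Omega^a_b)$ be the $\mathfrak{so}(7)$-valued Levi-Civita curvature $2$-form, and recall that Chern--Weil gives $p_1(M)=-\tfrac{1}{8\pi^{2}}\tr(\Omega\wedge\Omega)=\tfrac{1}{8\pi^{2}}\sum_{a,b}\Omega^a_b\wedge\Omega^a_b$, with $\sum_{a,b}|\Omega^a_b|^2=\tfrac12|R|^2$. The key step --- and the one I expect to require the most care --- is the observation that each $2$-form $\Omega^a_b$ is a section of $\Lambda_{14}$: by the Ambrose--Singer theorem and the hypothesis $\mathrm{Hol}(M)\subseteq G_2$, every curvature endomorphism $R(e_c,e_d)$ lies in the holonomy algebra $\mathfrak{hol}(M)\subseteq\mathfrak{g}_2$, and $\mathfrak{g}_2$, regarded inside $\Lambda^2 T^*M\cong\mathfrak{so}(7)$, is exactly $\Lambda_{14}$; since the symmetries of the Riemann tensor --- in particular $R_{abcd}=R_{cdab}$ --- identify the ``$2$-form slot'' of $\Omega^a_b$ with the endomorphism $R(e_a,e_b)$, this forces $\Omega^a_b\in\Gamma(\Lambda_{14})$ for all $a,b$. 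Granting this, I would then invoke the standard pointwise $G_2$ identity $*(\gamma\wedge\phi)=-\gamma$ on $\Lambda_{14}$ (equivalently $\gamma\wedge\phi=-*\gamma$), which yields $\gamma\wedge\gamma'\wedge\phi=-\langle\gamma,\gamma'\rangle\,\mathrm{dvol}$ for all $\gamma,\gamma'\in\Lambda_{14}$. Applying this to each summand $\Omega^a_b\wedge\Omega^a_b\wedge\phi$ and summing gives $p_1(M)\wedge\phi=-\tfrac{1}{16\pi^{2}}|R|^2\,\mathrm{dvol}$; integrating over $M$ and substituting into Theorem~\ref{th1} then produces $res(\zeta_\delta)(3/2)=-\tfrac{1}{36\pi^{4}}\int_M|R|^2\,\mathrm{dvol}\le 0$, with equality if and only if $R\equiv 0$, i.e. $M$ is flat.

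Thus the whole argument rests on the holonomy constraint: that is where the special geometry enters, and it is precisely what makes the ``cross'' contributions to $\tr(\Omega\wedge\Omega)\wedge\phi$ vanish and leaves a sign-definite integrand; everything else is $G_2$ linear algebra and the bookkeeping of constants. I do not anticipate any analytic difficulty, since all the spectral input has been absorbed into Theorem~\ref{th1}. (Alternatively one can bypass $p_1$ entirely: a direct Gilkey-type computation shows that the coefficient of $t^{-3/2}$ in $2\,\Tr e^{-t\Delta_7}-\Tr e^{-t\Delta_{14}}$ already has integrand a nonpositive multiple of $|R|^2$ --- essentially how Theorem~\ref{th1} itself is proved --- but the Chern--Weil route seems the most transparent.)
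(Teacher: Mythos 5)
Your proposal is correct and follows essentially the same route as the paper: the corollary is deduced from Theorem~\ref{th1} together with the nonpositivity of $\int_M p_1(M)\wedge\phi$ (with equality exactly in the flat case), which the paper simply quotes from \cite[Proposition 10.2.6]{j}. The only difference is that you unpack that citation, and your argument for it --- curvature $2$-forms $\Omega^a_b$ landing in $\Lambda_{14}$ via the holonomy reduction and the pair symmetry $R_{abcd}=R_{cdab}$, combined with $\ast(\phi\wedge\gamma)=-\gamma$ on $\Lambda_{14}$ to get a nonpositive multiple of $|R|^2\,\mathrm{dvol}$ --- is precisely the standard proof of that proposition, carried out correctly.
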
 

The proof of these results extends immediately to compact $Spin(7)$ manifolds, $X$, with defining $4$-form $\psi$.  
For $Spin(7)$ manifolds, the $2-$forms decompose into irreducible $Spin(7)$ representations 
$$\Lambda^2T^*X = \tilde\Lambda_7 \oplus \tilde\Lambda_{21}.$$
Let $\tilde\zeta_m(s)$, $m=7,21$ be the associated zeta functions of the Laplace Beltrami operator restricted to these summands. 
Let 
$$\tilde\zeta_{\delta}(s) = 3\tilde\zeta_7(s) - \tilde\zeta_{21}(s).$$
Then 

\begin{theorem}\label{th2}
The function $\tilde\zeta_{\delta}(s)$ admits an analytic extension to $Re(s) > 2$. It has a simple pole at $s=2$ with residue 
$$res(\tilde\zeta_\delta)(2) = \frac{1}{6\pi^2}\int_X p_1(X)\wedge \psi.$$ 
\end{theorem}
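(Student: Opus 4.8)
The plan is to exploit the heat-kernel asymptotics for the Laplacian restricted to each $G_2$-isotypic summand of $\Lambda^2$. Recall that for an operator with discrete spectrum, the residue of $\zeta_m$ at $s = \tfrac{7}{2} - k$ is (up to a normalizing Gamma factor) the integral over $M$ of the $k$-th heat coefficient $a_k$ of $\Delta_m$. So to prove Theorem~\ref{th1} I would compute the heat coefficients $a_0, a_1, a_2, a_3$ of $\Delta_7$ and $\Delta_{14}$ — equivalently, study the local asymptotic expansion of $\tr(P_7 e^{-t\Delta}|_{\Lambda^2})$ and $\tr(P_{14} e^{-t\Delta}|_{\Lambda^2})$ — and form the combination $2a_k^{(7)} - a_k^{(14)}$. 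The key point is that because $\Delta$ itself commutes with $P_m$ and acts on all of $\Lambda^2$, the heat coefficients of the full $\Lambda^2$-Laplacian satisfy $a_k^{(\Lambda^2)} = a_k^{(7)} + a_k^{(14)}$, and these are the classical Gilkey coefficients built from curvature invariants. The special holonomy enters through the covariant constancy of $\phi$: it forces the pointwise trace of each heat coefficient over $\Lambda_7$ to be expressible, via the $G_2$-module structure and the parallel identifications $\Lambda_7 \cong T^*M$, $\Lambda_{14} \cong \lie g_2$, in terms of the same curvature invariants that appear in $a_k^{(\Lambda^2)}$ together with contractions against $\phi$. Taking the combination $2\zeta_7 - \zeta_{14}$ is designed precisely so that the leading invariants cancel: since $\dim \Lambda_{14} = 2\dim\Lambda_7$, one gets $2a_0^{(7)} - a_0^{(14)} = 0$ and $2a_1^{(7)} - a_1^{(14)} = 0$ pointwise, which kills the poles at $s=\tfrac72$ and $s=\tfrac52$; the parity/structure of the invariants should also force $2a_2^{(7)} - a_2^{(14)} = 0$, killing the pole at $s=\tfrac32$ coming from $a_2$... wait — one must be careful: the pole at $s = \tfrac72 - k$ corresponds to $a_k$, so analyticity for $\Re(s) > \tfrac32$ requires the vanishing of the combined $a_0, a_1$ and the regular part through $a_2$; the residue at $s = \tfrac32$ is then governed by $a_2$, and I expect $2a_2^{(7)} - a_2^{(14)}$ to be a nonzero multiple of the $4$-form density $p_1(M)\wedge\phi$ divided by the volume form.

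Concretely, the first step is to set up the twisted Weitzenböck/Lichnerowicz formula: on $\Lambda_7 \cong T^*M$ the Hodge Laplacian is $\nabla^*\nabla$ plus the Ricci endomorphism, but $G_2$ holonomy implies $\mathrm{Ric} = 0$, so $\Delta_7 = \nabla^*\nabla$ acting on sections of $T^*M$ (after the parallel bundle isomorphism). On $\Lambda_{14} \cong \mathrm{ad}\, P$ for the holonomy bundle, $\Delta_{14} = \nabla^*\nabla + \mathcal R$ where $\mathcal R$ is the curvature-term endomorphism coming from the Weitzenböck formula on $2$-forms restricted to $\Lambda_{14}$. Then I would feed these into the standard Gilkey formula for $a_2$ of a Laplace-type operator $\nabla^*\nabla + E$: it has the shape $\frac{1}{360}(\text{scalar curvature invariants})\cdot \mathrm{rank} + (\text{terms linear in }E) + \frac12 \tr E^2 + \ldots$. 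With scalar curvature zero, many terms drop, and the combination $2a_2^{(7)} - a_2^{(14)}$ reduces to a universal pairing of the curvature $2$-form of $M$ with the $G_2$-structure. The arithmetic then has to match the known identity expressing $\int_M p_1 \wedge \phi$ as a curvature integral; the factor $\frac{4}{9\pi^2}$ should emerge from combining the Gilkey normalization constants with the $G_2$ trace identities and the normalization of $p_1$.

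For the $Spin(7)$ case (Theorem~\ref{th2}) the argument is structurally identical, on an $8$-manifold: $\zeta_m$ is analytic for $\Re(s) > 4$ with poles at $s = 4 - k$; since $\dim \tilde\Lambda_{21} = 3 \dim \tilde\Lambda_7$, the combination $3\tilde\zeta_7 - \tilde\zeta_{21}$ is tailored to cancel the heat coefficients $a_0, a_1$ (hence the poles at $s = 4, 3$), and one expects $3a_2^{(7)} - a_2^{(21)}$ to again produce only a nonzero multiple of the $4$-form $p_1(X)\wedge\psi$ wedged appropriately to an $8$-form (the $4$-form $\psi$ being parallel makes $p_1(X)\wedge\psi$ the relevant top-degree density), giving the pole at $s = 8/2 - 1 = \ldots$ — here the pole is at $s = 2$ because on the $8$-manifold the relevant coefficient sits at $s = n/2 - k = 4 - k$ only if one uses the untwisted normalization; but wait, the claimed pole is at $s=2$, so actually the relevant heat coefficient is $a_2$ with the zeta analytic for $\Re(s) > 4 - 1 = 3$, not $2$ — so I should instead expect cancellations through $a_2$ as well, with the residue at $s=2$ coming from $a_3$. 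In other words: the designed combination must kill $a_0, a_1, a_2$ pointwise (this is where the $Spin(7)$ representation theory and the parallel $4$-form do the work), leaving $a_3$ as the first nonvanishing term, and $\int_X (3a_3^{(7)} - a_3^{(21)})$ must equal $\frac{1}{6\pi^2}\int_X p_1(X)\wedge\psi$.

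The main obstacle I anticipate is the representation-theoretic bookkeeping that forces the vanishing of the lower heat coefficients and pins down the top one. One must show that the pointwise endomorphism $\mathcal R$ appearing in the Weitzenböck formula on each isotypic piece, when inserted into Gilkey's formulas, yields curvature contractions that (a) cancel in the weighted combination up to the critical order, and (b) at the critical order assemble exactly into the Chern–Weil representative of $p_1$ contracted against the parallel form. This requires the key $G_2$ (resp. $Spin(7)$) identities relating the curvature operator, viewed as an element of $\Sym^2(\lie g_2)$ (resp. $\Sym^2(\lie{spin}_7)$) via parallelism, to scalar invariants and to $\phi$-contractions; the Bianchi identity and the holonomy reduction of the curvature tensor are what make this work. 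Once those identities are in hand, the remaining steps — assembling the heat coefficients, integrating, and matching constants — are routine though calculation-heavy. I would organize the proof so that Theorem~\ref{th1} is proved in full detail and Theorem~\ref{th2} follows by the parallel computation with the analogous $Spin(7)$ identities.
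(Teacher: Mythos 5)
Your plan for the $Spin(7)$ case contains a concrete bookkeeping error that would derail the proof of this theorem. On an $8$-manifold the heat trace of $\Delta_m$ expands as $\sum_k c_{k,m}\,t^{k-4}$, so $\tilde\zeta_m$ has (possible) poles at $s=4-k$, $k=0,1,2,\dots$; the pole at $s=2$ is therefore controlled by the \emph{second} heat coefficient $a_2$ (quadratic in curvature), exactly as in your first instinct, and analyticity on $\Re(s)>2$ only requires the weighted cancellation of $a_0$ (which is $3\cdot 7-21=0$) and of $a_1$ (which follows from Ricci-flatness plus the fact that the curvature operator has range in $\tilde\Lambda_{21}\cong\mathfrak{spin}(7)$). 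Your final formulation, however, asserts that the combination must kill $a_0,a_1,a_2$ pointwise and that the residue at $s=2$ comes from $a_3$. That cannot work: if $3a_2^{(7)}-a_2^{(21)}$ vanished there would be no pole at $s=2$ at all, contradicting the statement; $a_3$ is cubic in curvature and of the wrong weight to produce $p_1(X)\wedge\psi$; and in any case $a_3$ governs the pole at $s=1$, not $s=2$. So as written the plan asks you to prove a false vanishing and to extract the residue from the wrong coefficient. The correct version of your route is: show $3a_0^{(7)}-a_0^{(21)}=0$ and $3a_1^{(7)}-a_1^{(21)}=0$, then identify $\int_X\bigl(3a_2^{(7)}-a_2^{(21)}\bigr)$ (with the $(4\pi)^{-4}$ normalization and $\Gamma(2)=1$) with $\frac{1}{6\pi^2}\int_X p_1(X)\wedge\psi$ via Gilkey's formula for $a_2$ of a Laplace-type operator $\nabla^*\nabla+E$ and the $Spin(7)$ curvature identities; this last quantitative step is the heart of the matter and is only gestured at in your write-up.

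For comparison, the paper avoids computing separate Gilkey coefficients altogether: it uses the algebraic fact that $\ast e(\psi)$ acts as $+3$ on $\tilde\Lambda_7$ and $-1$ on $\tilde\Lambda_{21}$ to write $3\tilde\zeta_7-\tilde\zeta_{21}$ as a single weighted trace $\frac{1}{\Gamma(s+1)}\int_0^\infty \Tr \ast e(\psi)\Delta e^{-t\Delta}t^s\,dt$, and then obtains both the vanishing of all lower-order terms and the value of the residue by Clifford-degree counting and Mehler's formula, in the style of the local index theorem (\cite{BGV}). That route gets the cancellations for free from the supertrace-like structure, whereas your route (once the pole/coefficient assignment is fixed) must establish each cancellation and the final identity by explicit Weitzenb\"ock and representation-theoretic computation.
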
 
For a $Spin(7)$ manifold,  $\int_M p_1(X)\wedge \psi\leq 0$ and vanishes if and only if $X$ is flat. (See \cite[Proposition 10.6.7]{j}). This gives us the following corollary. 
\begin{corollary}\label{BC2}
$$res(\tilde\zeta_\delta)(2) \leq 0,$$
with equality if and only if $X$ is flat.
\end{corollary}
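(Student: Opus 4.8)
\textbf{Proof proposal for Corollary \ref{BC2}.}

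The plan is to combine Theorem \ref{th2} with the pointwise integrand inequality for $p_1(X)\wedge\psi$ on a $\Spin(7)$ manifold. By Theorem \ref{th2}, $\res(\tilde\zeta_\delta)(2) = \frac{1}{6\pi^2}\int_X p_1(X)\wedge\psi$, so the two assertions — that the residue is nonpositive, and that it vanishes precisely when $X$ is flat — are equivalent to the corresponding assertions about $\int_X p_1(X)\wedge\psi$. The first step is therefore simply to invoke this reduction and reduce everything to understanding the sign of $\int_X p_1(X)\wedge\psi$.

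Next I would recall, following \cite[Proposition 10.6.7]{j}, why $p_1(X)\wedge\psi$ is a nonpositive multiple of the volume form. The key point is that on a torsion-free $\Spin(7)$ structure the Riemann curvature tensor $R$, viewed as an element of $\Sym^2(\Lambda^2 T^*X)$, takes values in $\Sym^2(\tilde\Lambda_{21})$, i.e.\ in $\Sym^2(\mathfrak{spin}(7))$ after the identification $\tilde\Lambda_{21}\iso\mathfrak{spin}(7)$; this is the integrability constraint imposed by $\nabla\psi=0$. Under the wedge-with-$\psi$ pairing, $\Lambda^4 T^*X$ is isomorphic to $\reals$ (via $\wedge\psi$ followed by the volume form) on the relevant component, and one computes that for a curvature operator supported on $\mathfrak{spin}(7)$ the resulting quadratic form is \emph{negative semidefinite}: concretely, $p_1(X)\wedge\psi = -\frac{1}{8\pi^2}\,|R|^2_{\text{(appropriate norm)}}\,\vol_X$ up to a positive constant, where the relevant norm is a genuine norm on $\Sym^2(\mathfrak{spin}(7))$ (not merely a semi-norm once one restricts to the algebraic curvature tensors satisfying the first Bianchi identity). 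The upshot is the pointwise bound $p_1(X)\wedge\psi \leq 0$ with equality at a point if and only if $R$ vanishes there.

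Integrating the pointwise inequality gives $\int_X p_1(X)\wedge\psi \leq 0$, hence $\res(\tilde\zeta_\delta)(2)\leq 0$. For the equality case: if $\res(\tilde\zeta_\delta)(2)=0$ then $\int_X p_1(X)\wedge\psi = 0$; since the integrand is everywhere $\leq 0$ and continuous, it must vanish identically, so $R\equiv 0$ on $X$ and $X$ is flat. Conversely, if $X$ is flat then $p_1(X)$ is represented by the zero form in the Chern--Weil construction, so the integral — and hence the residue — is zero. This closes the equivalence.

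The main obstacle is verifying the definiteness claim in the middle step: one must check that after wedging with $\psi$ the induced quadratic form on curvature operators valued in $\mathfrak{spin}(7)$ really is sign-definite on the space of algebraic curvature tensors, rather than merely semidefinite with a nontrivial kernel that could be populated by Bianchi-symmetric tensors. This is exactly the content of \cite[Proposition 10.6.7]{j} (and the parallel statement \cite[Proposition 10.2.6]{j} in the $G_2$ case), so I would cite it for the representation-theoretic computation; reproducing it would require expanding $\psi$ in an adapted coframe and tracking the contraction of a $\mathfrak{spin}(7)$-valued $2$-form against it, using that $\mathfrak{spin}(7)$ is the $+1$-eigenspace (up to normalization) of the natural endomorphism of $\Lambda^2$ induced by $\psi$, so that the "wrong-sign" contributions from $\tilde\Lambda_7$ are absent by the holonomy constraint. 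Everything else is a one-line integration and the elementary observation that a continuous function of one sign with zero integral vanishes.
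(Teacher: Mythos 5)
Your proposal is correct and follows the same route as the paper: the corollary is obtained by combining the residue formula of Theorem \ref{th2} with the fact, cited from \cite[Proposition 10.6.7]{j}, that $\int_X p_1(X)\wedge\psi\leq 0$ with equality precisely when $X$ is flat. The additional sketch of why Joyce's proposition holds (curvature valued in $\mathfrak{spin}(7)\iso\tilde\Lambda_{21}$, on which $\ast e(\psi)$ acts as $-1$, giving a pointwise nonpositive integrand) is a correct elaboration but is not needed beyond the citation.
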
 

The theorems can also be extended to a twisted situation. Let $E$ be a vector bundle over a compact manifold of holonomy $G_2$ or $Spin(7)$. Suppose that $E$ is equipped with an instanton connection, $A$. Recall (see \cite{rc},\cite{dt}) that a connection on a bundle over a $G_2$ or $Spin(7)$ manifold is called an {\em instanton} if its curvature, $F_A$, satisfies 
$P_7F_A = 0.$  The connection induces an exterior derivative, 
$d_A$, on $E-$ valued forms. 
Set $\Delta_A = d_Ad_A^*+d_A^*d_A.$ The assumption that $A$ is an instanton implies 
$$0 = [\Delta_A,P_m].$$
Hence we may again decompose 
$$\Delta_A \,\,\text{restricted to 2 forms} = \Delta_{A,7}+ \Delta_{A,7j},$$
where $j=2$ for $G_2$ manifolds and $j=3$ for $Spin_7$ manifolds.
Let $\zeta_{A,m}(s),$ $\zeta_{A,\delta}(s)$, $\tilde\zeta_{A,m}(s)$, and $\tilde\zeta_{A,\delta}(s)$ be the zeta functions for the twisted Laplacians. 
Then we have 
\begin{theorem}\label{th3}
The function $\zeta_{A,\delta}(s)$ admits an analytic extension to $Re(s) > \frac{3}{2}$. It has a simple pole at $s=\frac{3}{2}$ with residue 
$$res(\zeta_{A,\delta})(3/2) = \frac{4}{3\pi^{2}}\int_M (\frac{1}{3}p_1(TM) +  c_1^2(E)-c_2(E))\wedge \phi.$$
The function $\tilde\zeta_{A,\delta}(s)$ admits an analytic extension to $Re(s) > 2$. It has a simple pole at $s=2$ with residue 
$$res(\tilde\zeta_{A,\delta})(2) = \frac{1}{2\pi^{2}}\int_X (\frac{1}{3}p_1(TM) +  c_1^2(E)-c_2(E))\wedge \psi.$$ 
\end{theorem}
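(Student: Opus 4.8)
The plan is to follow the argument of Theorems~\ref{th1} and~\ref{th2}; I describe it for the $G_2$ case, the $Spin(7)$ case being obtained by replacing $\mathfrak{g}_2\subset\mathfrak{so}(7)$ acting on $\Lambda^2\IR^7 = \Lambda_7\oplus\Lambda_{14}$ with $\mathfrak{spin}(7)\subset\mathfrak{so}(8)$ acting on $\Lambda^2\IR^8 = \tilde\Lambda_7\oplus\tilde\Lambda_{21}$, together with the obvious change of numerical constants. Since $A$ is an instanton, $P_7F_A = 0$, so (as recalled in the introduction) $\Delta_{A,m}$ is a well-defined Laplace-type operator on the bundle $\Lambda_m\otimes E$ over the closed manifold $M$; hence $\Tr e^{-t\Delta_{A,m}} \sim (4\pi t)^{-7/2}\sum_{k\geq 0}a_{2k}(\Delta_{A,m})\,t^k$, and $\zeta_{A,m}(s)$ has at most simple poles, at $s=\frac{7}{2}-k$ with residue $a_{2k}(\Delta_{A,m})/\Gamma(\frac{7}{2}-k)$. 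With $\zeta_{A,\delta} = 2\zeta_{A,7}-\zeta_{A,14}$, the theorem reduces to showing that the combinations $2a_0(\Delta_{A,7})-a_0(\Delta_{A,14})$ and $2a_2(\Delta_{A,7})-a_2(\Delta_{A,14})$ vanish, which yields the continuation to $Re(s)>\frac{3}{2}$, and that $2a_4(\Delta_{A,7})-a_4(\Delta_{A,14})$, divided by $\Gamma(\frac{3}{2})=\frac{\sqrt\pi}{2}$, equals the asserted residue.

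The input is the twisted Bochner--Weitzenböck formula on $E$-valued $2$-forms, $\Delta_A = \nabla_A^*\nabla_A + q(R)\otimes 1 + \mathcal{F}_A$, where $q(R)$ is the Riemannian Weitzenböck curvature on $\Lambda^2$ (self-adjoint, linear in $R$) and $\mathcal{F}_A$ acts on the form factor by the natural action on $\Lambda^2$ of $F_A\in\Lambda^2\otimes\End(E)$ under the metric identification $\Lambda^2\cong\mathfrak{so}(7)$. Two holonomy facts drive the computation. First, $\mathrm{Hol}(M)\subseteq G_2$ forces the metric to be Ricci-flat and the curvature operator to lie in $\mathrm{Sym}^2(\mathfrak{g}_2)=\mathrm{Sym}^2(\Lambda_{14})$; being self-adjoint, it then annihilates $\Lambda_7$ and preserves $\Lambda_{14}$, so $q(R)|_{\Lambda_7}=0$. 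Second, because $A$ is an instanton, $F_A$ is $\Lambda_{14}=\mathfrak{g}_2$-valued, and $\mathfrak{g}_2$ acts on each irreducible summand $\Lambda_7,\Lambda_{14}$; hence $\mathcal{F}_A$ preserves the splitting and restricts to $\Lambda_m\otimes E$. Consequently the Weitzenböck endomorphism of $\Delta_{A,m}$ is $E_m=-q(R)|_{\Lambda_m}\otimes 1-\mathcal{F}_A|_{\Lambda_m\otimes E}$, which on the $\Lambda_7$ summand is just $-\mathcal{F}_A$.

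The $a_0$ combination vanishes by the dimension count $\dim(\Lambda_7\otimes E)=7\rank(E)$, $\dim(\Lambda_{14}\otimes E)=14\rank(E)$, $2\cdot 7=14$. For $a_2$ and $a_4$ one inserts the Weitzenböck data into Gilkey's formulas, which on a Ricci-flat manifold give, after integration, $a_2=(4\pi)^{-7/2}\int_M\tr E_m$ and $a_4=(4\pi)^{-7/2}\frac{1}{360}\int_M\tr\bigl(180E_m^2+30\,\Omega_{ij}\Omega_{ij}+2|R|^2\,\mathrm{Id}\bigr)$, with $\Omega=R^{\Lambda_m}\otimes 1+1\otimes F_A$ the curvature of $\nabla_A$ on $\Lambda_m\otimes E$. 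In each difference $2(\,\cdot\,)_7-(\,\cdot\,)_{14}$ the following cancel: the $|R|^2\,\mathrm{Id}$ term, since $2\cdot 7-14=0$; every term that is the trace of a skew-symmetric endomorphism of $\Lambda_m$ — this disposes of $\tr E_7$ (where $E_7=-\mathcal{F}_A$) and of the $F_A$-linear ``mixed'' part of $\Omega_{ij}\Omega_{ij}$, and, since $q(R)|_{\Lambda_{14}}$ is self-adjoint while $F_A$ acts on $\Lambda_{14}$ by skew endomorphisms, the cross terms $\tr(q(R)\mathcal{F}_A)$ inside $E_m^2$ as well; and the $F_A$-quadratic part of $\Omega_{ij}\Omega_{ij}$, again by $2\cdot 7-14=0$. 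Moreover $\tr_{\Lambda^2}q(R)$ is a multiple of scalar curvature, hence $0$, so $2a_2(\Delta_{A,7})-a_2(\Delta_{A,14})=0$. What survives in $2a_4(\Delta_{A,7})-a_4(\Delta_{A,14})$ is $\rank(E)$ times the purely Riemannian quantity $2a_4(\Delta_7)-a_4(\Delta_{14})$ already computed in Theorem~\ref{th1} (from the $q(R)\otimes 1$ and $R^{\Lambda_m}\otimes 1$ pieces), together with an instanton term $2\tr_{\Lambda_7\otimes E}(\mathcal{F}_A^2)-\tr_{\Lambda_{14}\otimes E}(\mathcal{F}_A^2)$; since $\tr_{\Lambda_{14}\otimes E}(\mathcal{F}_A^2)=4\,\tr_{\Lambda_7\otimes E}(\mathcal{F}_A^2)$ — the $4$ being the ratio of Dynkin indices of the $\mathfrak{g}_2$-representations $\Lambda_{14}$ and $\Lambda_7$, and $5$ the analogous ratio for $\mathfrak{spin}(7)$ on $\tilde\Lambda_{21},\tilde\Lambda_7$ — and $\tr_{\Lambda_7\otimes E}(\mathcal{F}_A^2)$ is a nonzero multiple of the pointwise scalar $\sum_{i<j}\tr_E(F_{ij}F_{ij})$, this instanton term is a fixed nonzero multiple of that scalar.

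It remains to express the two surviving densities as integrals of $4$-forms against $\phi$. For the Riemannian part this is exactly the identity used for Theorem~\ref{th1}: Chern--Weil for $p_1$ together with $R\in\mathrm{Sym}^2\mathfrak{g}_2$ exhibits the density as a fixed multiple of $p_1(M)\wedge\phi$ over the volume form. For the instanton part one uses that on a $G_2$-manifold $*(\phi\wedge\,\cdot\,)$ acts on $2$-forms with eigenvalue $+2$ on $\Lambda_7$ and $-1$ on $\Lambda_{14}$ (equivalently $\omega\wedge\omega\wedge\phi=(2|P_7\omega|^2-|P_{14}\omega|^2)\vol$); polarizing this and applying it to the $\Lambda_{14}$-valued $F_A$ makes $\tr_E(F_A\wedge F_A)\wedge\phi$ proportional to $\bigl(\sum_{i<j}\tr_E(F_{ij}F_{ij})\bigr)\vol$, and Chern--Weil rewrites $\tr_E(F_A\wedge F_A)$ through $c_1^2(E)-c_2(E)$. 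Assembling the constants — the Gilkey coefficients, $(4\pi)^{-7/2}$, $\Gamma(\frac{3}{2})$, and the Dynkin-index ratio — produces the stated residue, and the $Spin(7)$ assertion follows the same way using Ricci-flatness of $Spin(7)$-holonomy, the analogues of the two holonomy facts, the eigenvalue identity for $\psi$ on $2$-forms, and the $\mathfrak{spin}(7)$ ratio. The crux, and the step most susceptible to error, is the bookkeeping in the $a_4$ combination: one must check that every cross-term built from $q(R)$, $R^{\Lambda_m}$, and $F_A$ either cancels in $2(\,\cdot\,)_7-(\,\cdot\,)_{14}$ or recombines, so that only the ``rank $\times$ untwisted'' term and the single instanton term remain — which works precisely because the two holonomy facts decouple the form and bundle factors on the $\Lambda_7$ summand.
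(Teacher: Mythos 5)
You take a genuinely different route from the paper. The paper never splits the Laplacian: it writes $\zeta_{A,\delta}$ as a single weighted trace $\Tr \ast e(\phi)\Delta_A e^{-t\Delta_A}$ using $\ast e(\phi)=2P_7-P_{14}$ on $\Lambda^2$, kills all sub-leading heat coefficients by a Clifford-degree count (Lemma \ref{vanlem} plus the total-degree filtration), and extracts the residue in one stroke from Mehler's formula for the rescaled operator $\hat L_A$, which directly produces the characteristic form wedged with $\phi$ (resp.\ $\psi$). You instead work on the eigenbundles $\Lambda_7\otimes E$ and $\Lambda_{14}\otimes E$ separately and feed the Weitzenb\"ock data into Gilkey's $a_0,a_2,a_4$. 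Your qualitative bookkeeping is correct: $\Delta_{A,m}$ is Laplace-type because $\phi$ is parallel and $F_A$ is $\mathfrak{g}_2$-valued; on a Ricci-flat $2$-form bundle the Weitzenb\"ock term is $-2$ times the curvature operator, which lies in $\mathrm{Sym}^2\Lambda_{14}$ and so kills $\Lambda_7$; $\tr \mathcal{F}_A$ and the symmetric-times-skew cross terms vanish; the $|R|^2\mathrm{Id}$ and $F$-quadratic parts of $\Omega_{ij}\Omega_{ij}$ cancel by $2\cdot 7-14=0$; and the trace-form ratios $4$ (for $\mathfrak{g}_2$ on $\Lambda_{14}$ vs.\ $\Lambda_7$) and $5$ (for $\mathfrak{spin}(7)$) are right. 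Your method gives a more elementary, representation-theoretic explanation of why the poles at $s=\frac72,\frac52$ (resp.\ $4,3$) cancel; the paper's rescaling argument gives the residue as a closed formula without term-by-term cancellation.

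The gap is at the quantitative heart of the theorem, which is an explicit residue. You never assemble the constants, and the structure your computation outputs is not visibly the stated one: what survives in $2a_4(\Delta_{A,7})-a_4(\Delta_{A,14})$ is $\rank(E)$ times the untwisted Riemannian combination (so, after invoking Theorem \ref{th1}, a term carrying $\rank(E)\cdot\frac13 p_1$) plus a fixed multiple of the pointwise scalar $\sum_{i<j}\tr_E(F_{ij}F_{ij})$, which by the instanton condition and the eigenvalue identity for $\ast e(\phi)$ is a multiple of $\tr_E(F_A\wedge F_A)\wedge\phi$, i.e.\ of $(c_1^2-2c_2)\wedge\phi$. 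A standalone $c_1^2(E)$ with a coefficient not tied to $-2$ times the $c_2$ coefficient cannot arise from the tensor-product traces in $E_m^2$ and $\Omega_{ij}\Omega_{ij}$ (each produces $\tr_{\Lambda_m}(\cdot)\,\tr_E(\cdot)$, never $\tr_E(\cdot)\,\tr_E(\cdot)$). So the claim that ``assembling the Gilkey coefficients, $(4\pi)^{-7/2}$, $\Gamma(\frac32)$ and the index ratio produces the stated residue'' is precisely the step you have not done, and it requires either carrying out that arithmetic and reconciling it with the stated combination $\frac13 p_1+c_1^2-c_2$ (including the absence of an explicit rank factor), or computing the top coefficient as the paper does, via the Mehler/Getzler formula in which the $E$-dependence enters through $\exp(\frac{t}{2}F(0))$ and the $4$-form emerges directly. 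As written, the analytic continuation is proved but the residue formula is asserted rather than derived.
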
 
The fact that these measures of spectral asymmetry depend only on characteristic classes and the cohomology classes of $\phi$ and $\psi$ is surprising. It is easy to show that the exact forms do not contribute to these residues. Because the space of harmonic forms is finite dimensional, it does not contribute to the residues.  Hence there can be no mass cancellation of all but harmonics (as occurs in index theory) explaining the topological nature of the residues. 

 The proof of these results rapidly reduces to standard heat equation asymptotics, and requires no new techniques.  In fact, the techniques are standard for the computation of higher signatures, whose definition we now recall.
 
Let $V$ be a compact manifold. Let $f:V\rightarrow K(\pi,1)$ be a continuous map, for some group $\pi$, most often taken to be $\pi = \pi_1(V)$. Let $h\in H^p(K(\pi,1),\IR)$, and let $z$ be a de Rham representative of $f^*h.$
Let $L(TV)$ denote the Hirzebruch $L$ class of $TV$. Then 
$$\int_V z\wedge L(TV),$$
is called a {\em higher signature} of $V$. The homotopy invariance of these higher signatures is the subject of the Novikov conjecture. 
The genesis of these theorems was the observation that if we replace $z$ by the covariant constant forms $\phi$ and $\psi$ defining the special holonomies, then the associated analog of the higher signature has the spectral representation given in the preceding theorems. These computations were motivated by a desire to gain new analytic interpretations of    the higher signature invariants.  

\noindent{\bf Acknowledgements} I wish to thank Benoit Charbonneau for pointing out Corollaries \ref{BC1} and \ref{BC2} to me. 

\section{Reduction to Heat equation asymptotics}

Given a differential form $w$, denote by $e(w)$ exterior multiplication on the left by $w$. Let $\ast$ denote the Hodge star operator. 
Our computations begin in the $G_2$ case with the identification of $\Lambda_7$ and $\Lambda_{14}$ respectively  as the $+2$ and $-1$ eigenspaces of $\ast e(\phi)$. (See for example \cite{b2},\cite{j}). Similarly, in the $Spin(7)$ case, 
$\ast e(\psi)$ acts as $+3$ on $\Lambda_7$ and $-1$ on $\Lambda_{21}.$
Thus we can write 
$$\zeta_{A,\delta}(s) = \frac{1}{\Gamma(s+1)}\int_0^{\infty}\Tr\ast e(\phi)\Delta_Ae^{-t\Delta_A}t^{s}dt,$$
and 
$$\tilde\zeta_{A,\delta}(s) = \frac{1}{\Gamma(s+1)}\int_0^{\infty}\Tr\ast e(\psi)\Delta_Ae^{-t\Delta_A}t^{s}dt.$$
For any $a>0$, the integral, $\frac{1}{\Gamma(s+1)}\int_a^{\infty}\Tr\ast e(w)\Delta_Ae^{-t\Delta_A}t^{s}dt$, is bounded for all $s$. Hence the poles of the zeta functions are determined by 
$$\int_0^{1}\Tr\ast e(w)\Delta_Ae^{-t\Delta_A}\frac{t^{s}}{\Gamma(s+1)}dt
 = \int_0^{1}\Tr\ast e(w)e^{-t\Delta_A}\frac{t^{s-1}}{\Gamma(s)}dt
 - \frac{1}{\Gamma(s+1)}\Tr\ast e(w)\Delta_Ae^{-\Delta_A}.$$ The poles are therefore determined by the small $t$ asymptotics of  $\Tr\ast e(w)e^{-t\Delta_A}$. It is well known (and will be recalled in Section \ref{asympsec}) that 
with $n$ denoting the dimension of our manifold, 
$$\Tr\ast e(w)e^{-t\Delta_A} = \sum_{k=0}^{N}b_kt^{(k-n)/2} + O(t^{(N-n+1)/2}).$$
Hence, for $N+2Re(s)$ large
\begin{equation}\label{poles0}
\zeta_{A,\delta}(s) = \sum_{k=0}^{N}\frac{b_k}{\Gamma(s)}\int_0^{1}t^{(k-n)/2}t^{s-1}dt + \text{holomorphic}.\end{equation}
This gives the analytic continuation on a half plane $2Re(s)+N>n$
\begin{equation}\label{poles}
\zeta_{A,\delta}(s) = \sum_{k=0}^{N}\frac{b_k}{\Gamma(s+1)(s-(n-k)/2)} + \text{holomorphic}.\end{equation}
Hence our main theorems reduce to the well understood computations of the $b_k$. In particular, we need to show $b_k = 0$ for $k<n-\deg\,\psi $ (Proposition \ref{nores}),  and we need to compute $b_{n-\deg\psi}$ (Proposition \ref{btopres}). This type of computation is routine in many index theory contexts. A standard reference is \cite{BGV}. For the convenience of the reader unfamiliar with heat equation asymptotics, we perform the requisite computations in the following two sections.
 
\section{Algebraic trace reductions}

Let $\{\omega^i\}_{i=1}^n$ be an orthonormal coframe. 
Recall the representation of the Clifford algebra on the exterior algebra is given by defining Clifford multiplication as 
$$c(\omega^i) = e(\omega^i) - e^*(\omega^i),$$
where $e^*(w)$ is the adjoint of $e(w)$. 
Define also the operation 
$$\hat c(\omega^i) =  e(\omega^i) + e^*(\omega^i).$$
For a multi index $I$, with no repeated indices, we set 
$$c(\omega^I) = c(\omega^{i_1})c(\omega^{i_2})\cdots c(\omega^{i_n}),$$
and 
$$\hat c(\omega^I) = \hat c(\omega^{i_1})\hat c(\omega^{i_2})\cdots \hat c(\omega^{i_n}).$$
Every endomorphism of the exterior algebra can be written in the form 
$$\phi = \sum_{I,J}\phi_{IJ}c(\omega^I)\hat c(\omega^J).$$
We define the upper Clifford degree of $\phi$, 
$$deg_c^U\phi:=\max\{|I|:\phi_{IJ}\not = 0\},$$
and the lower Clifford degree
$$deg_c^L\phi:=\min\{|I|:\phi_{IJ}\not = 0\}.$$
If $deg_c^L\phi = k = deg_c^U\phi$, we say $\phi$ is homogeneous of Clifford degree $k$. 
We can expand
$$e(\omega^I) = 2^{-|I|}(c(\omega^{i_1}) + \hat c(\omega^{i_1}))\cdots (c(\omega^{i_{|I|}}) + \hat c(\omega^{i_{|I|}})),$$
and therefore see that $deg_c^Ue(\omega^I) = |I|$. 
In particular, we can write $$e(\omega^I) = 2^{-|I|}c(\omega^{I}) +\text{lower clifford degree terms}.$$ 

We recall some elementary clifford algebra trace identities.
\begin{lemma}\label{vanlem}
If $(|I|,|J|)\not = (0,0),$ then 
$$\tr \,c(\omega^I)\hat c(\omega^J) = 0.$$
\end{lemma}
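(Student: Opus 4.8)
The plan is to combine cyclicity of the trace with the Clifford (anti)commutation relations among the operators $c(\omega^i)$ and $\hat c(\omega^j)$. First I would record, from the orthonormality of $\{\omega^i\}$ and the standard relations $e(\omega^i)e(\omega^j)+e(\omega^j)e(\omega^i)=0$, $e^*(\omega^i)e^*(\omega^j)+e^*(\omega^j)e^*(\omega^i)=0$, $e(\omega^i)e^*(\omega^j)+e^*(\omega^j)e(\omega^i)=\delta_{ij}$, that
$$c(\omega^i)c(\omega^j)+c(\omega^j)c(\omega^i)=-2\delta_{ij},\qquad \hat c(\omega^i)\hat c(\omega^j)+\hat c(\omega^j)\hat c(\omega^i)=2\delta_{ij},\qquad c(\omega^i)\hat c(\omega^j)+\hat c(\omega^j)c(\omega^i)=0.$$
In particular the $2n$ operators $c(\omega^1),\dots,c(\omega^n),\hat c(\omega^1),\dots,\hat c(\omega^n)$ pairwise anticommute whenever their positions in this list differ, and each is invertible (with $c(\omega^i)^{-1}=-c(\omega^i)$ and $\hat c(\omega^j)^{-1}=\hat c(\omega^j)$).

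Next, given $\Gamma:=c(\omega^I)\hat c(\omega^J)$ with $(|I|,|J|)\neq(0,0)$, I would exhibit one generator $\gamma$ from that list with $\gamma\,\Gamma\,\gamma^{-1}=-\Gamma$. Conjugating $\Gamma$ by a generator $\gamma$ multiplies it by $(-1)^{|I|+|J|-\epsilon}$, where $\epsilon=1$ if $\gamma$ occurs among the $|I|+|J|$ factors of $\Gamma$ and $\epsilon=0$ otherwise, since $\gamma$ anticommutes with every factor other than itself. If $|I|+|J|$ is odd, then $|I|+|J|\le 2n-1$, so some generator does not occur in $\Gamma$, and conjugation by it yields the sign $(-1)^{|I|+|J|}=-1$. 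If $|I|+|J|$ is even and positive, conjugating by one of the factors of $\Gamma$ yields $(-1)^{|I|+|J|-1}=-1$.

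Finally, cyclicity of the trace gives $\tr(\Gamma)=\tr(\gamma\,\Gamma\,\gamma^{-1})=-\tr(\Gamma)$, whence $\tr(\Gamma)=0$. The only point requiring care is the elementary case split in the middle step — in particular, that a generator absent from $\Gamma$ exists whenever $|I|+|J|$ is odd, which fails only if $|I|=|J|=n$, a case in which $|I|+|J|$ is even — and I do not anticipate any real difficulty there.
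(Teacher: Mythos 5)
Your proof is correct and rests on the same mechanism as the paper's: cyclicity of the trace combined with the Clifford anticommutation relations, arranged so that the product is conjugated into its negative. The paper splits into cases according to the parities of $|I|$ and $|J|$ separately (swapping the two blocks, peeling off a factor, or inserting $-c(\omega^m)^2$ for $m\notin I$), while you package the same moves as a single conjugation-by-a-generator argument with a two-case split on the parity of $|I|+|J|$; this is a tidier organization, not a different method.
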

\begin{proof}
$$ c(\omega^I)\hat c(\omega^J) = (-1)^{|I||J|}\hat c(\omega^J)c(\omega^I).$$
Hence cyclicity of the trace implies vanishing unless one of $|I|$ or $|J|$ is even. If $|I|>0$, 
$$ c(\omega^I)\hat c(\omega^J) = c(\omega^{i_1})c(\omega^{I\setminus \{i_1\}})\hat c(\omega^J)
=  (-1)^{(|I|-1)|J|}c(\omega^{I\setminus \{i_1\}})\hat c(\omega^J)c(\omega^{i_1}).$$
Hence cyclicity of the trace again implies vanishing if $|I|$ and $|J|$ are both even and one of the two is nonempty. Hence we are left with the case where $|I|$ and $|J|$ have different parity, and therefore one is not maximal. Assume $|I|$ is not maximal; that is assume there is $m\not\in I$. Then 
$$ c(\omega^I)\hat c(\omega^J) = -c(\omega^m)c(\omega^m)c(\omega^I)\hat c(\omega^J)
 = c(\omega^m)c(\omega^I)\hat c(\omega^J)c(\omega^m) .$$
By cyclicity of the trace, this term also vanishes. The case $|J|$ not maximal is handled similarly, completing the proof. \end{proof}

We can express the Hodge star operator in terms of clifford multiplication by the volume form. In the dimensions under consideration, this gives 
\begin{equation}\Tr\ast e(w)e^{-t\Delta_A} = -\Tr c(dvol) e(w)e^{-t\Delta_A}.\end{equation}

\section{The Asymptotics}\label{asympsec}

 We review here the construction of an approximation to $e^{-t\Delta_A}$, and then we reduce residue computations to those for harmonic oscillators as in \cite{BGV}.  
Using the Cauchy integral formula to write
$$e^{-t\Delta_A} = \frac{-1}{2\pi i}\int_{\gamma}e^{-\lambda}(t\Delta_A-\lambda)^{-1}d\lambda,$$
reduces the construction to approximating $(t\Delta_A-\lambda)^{-1}$. 
The standard method of approximation (see \cite{Gil}), which we will follow here, is to construct an approximation in coordinate neighborhoods and then patch these local approximations together using partitions of unity and auxillary cutoff functions. We suppress this latter patching step in our discussion.  

 Fix $y\in M$ and geodesic coordinates centered at $y$. Fix a frame for $E$ in a neighborhood of $y$ in which we can write 
$d_A = d+ A$ with $A(y) = 0$ and $A_{i,j}(y) = -\frac{1}{2}F_{ij}(y).$
 
 Define
$$P_{\lambda,N}f(x) = \int e^{2\pi i(x-y)\cdot u}\sum_{j=0}^N(4\pi^2t|u|^2-\lambda)^{-j-1}a_j(x,y)f(y)dydu,$$
with $a_0=Id$ in our choice of local frames. 
The remaining $a_j$ are chosen inductively with  
$$(4\pi^2t|u|^2-\lambda)^{-j}a_j(x,y) = -(t\Delta_{A,x}-4\pi itu^k\nabla_k)(4\pi^2t|u|^2-\lambda)^{-j}a_{j-1}(x,y),$$
for $1\leq j\leq N$. This gives the recipe 
$$(4\pi^2t|u|^2-\lambda)^{-j-1}a_j(x,y) = (-t)^j(4\pi^2t|u|^2-\lambda)^{-1}[(D_x^2-4\pi iu^k\nabla_k)(4\pi^2t|u|^2-\lambda)^{-1}]^ja_{0}(x,y).$$
With this choice (and continuing to suppress cutoffs and partitions of unity), 
$$(t\Delta_{A}-\lambda)P_{\lambda,N}f(x) = \int\int (t\Delta_{A,x}-\lambda)e^{2\pi i(x-y)\cdot u}\sum_{j=0}^N(4\pi^2t|u|^2-\lambda)^{-j-1}a_j(x,y)f(y)dydu$$
$$= \int\int e^{2\pi i(x-y)\cdot u}(t\Delta_{A,x}-4\pi itu^k\nabla_k + 4\pi^2|u|^2-\lambda)\sum_{j=0}^N(4\pi^2t|u|^2-\lambda)^{-j-1}a_j(x,y)f(y)dydu$$

$$= f(x) + \int\int e^{2\pi i(x-y)\cdot u}[(t\Delta_{A,x}-4\pi itu^k\nabla_k)(4\pi^2t|u|^2-\lambda)^{-N-1}a_N(x,y)]f(y)dydu,$$

Inserting this back into our expression for $e^{-t\Delta_{A}}$ gives, for a suitable curve $\gamma$ in $\IC$ surrounding the real axis, 
the approximate heat kernel 
$$p^N_t(x,y) = \int_{\gamma}\frac{-e^{-\lambda}}{2\pi i}\int e^{2\pi i(x-y)\cdot u}\sum_{j=0}^N(-t)^j(4\pi^2t|u|^2-\lambda)^{-1}[(\Delta_{A,x}-4\pi iu^k\nabla_k)(4\pi^2t|u|^2-\lambda)^{-1}]^ja_{0}(x,y)dud\lambda.$$
The error term $p_t- p_t^N$ has trace class norm which is decreasing faster than $O(t^{N/4})$, (not sharp) for $N$ large and $t\rightarrow 0$. 
Expand
$$[(\Delta_{A,x}-4\pi iu^k\nabla_k)(4\pi^2t|u|^2-\lambda)^{-1}]^ja_0 = \sum_{l,J,p}(4\pi^2t|u|^2-\lambda)^{-l}u^Jt^pa_{j,l,J,p}(x,y).$$

Inserting this into our expression for $p^N_t(x,y)$, changing the order of integration, and performing the contour integral gives 

$$p^N_t(x,y) = \int e^{-4\pi^2t|u|^2} e^{2\pi i(x-y)\cdot u}\sum_{j=0}^N(-t)^j\sum_{l,J,p}u^Jt^pa_{j,l,J,p}(x,y)du.$$
Evaluating at $x=y$ gives 

\begin{equation}\label{asymp}
p^N_t(x,x) = \int e^{-4\pi^2|u|^2} \sum_{j=0}^N(-1)^j\sum_{l,J,p}u^Jt^{j+p-n/2-|J|/2}a_{j,l,J,p}(x,x)du.\end{equation}

Inserting this expansion into equation (\ref{poles}), we see that $\zeta_{\delta}(s)$ admits an analytic extension to $Re(s) > \frac{\deg(\psi)}{2}$
\begin{equation}\label{pole2}
 \text{if }\int_M \tr\,c(dvol)e(\psi)a_{j,l,J,p}(x,x)dV = 0\text{ for 
 }j+p-n/2-|J|/2 < -\frac{\deg(\psi)}{2}.\end{equation} 

The endomorphism $c(dvol)e(\psi)$ satisfies
$$deg^L_cc(dvol)e(\psi) = n-\deg\,\psi.$$
Hence $\tr\,c(dvol)e(\psi)a_{j,l,J,p}(x,x) = 0$, unless $\deg^U_ca_{j,l,J,p}(x,x)\geq n-\deg\,\psi.$ 

So, our theorems now reduce to standard counting of Clifford degree in the construction of the $a_{j,l,J,p}(x,x)$. We recall how this is done. 
In our choice of frame, we can write in a neighborhood of the origin $y$, of our coordinate neighborhood,
$$\nabla_i = \frac{\partial}{\partial x^i} - \frac{1}{2}(x^j-y^j)(R_{ij}+F_{ij}) + O(|x-y|^2).$$
Since we will be evaluating at $y=x$, we pass to coordinates with $y=0$. Then we see that $\nabla$ has upper Clifford degree $2$ since $R_{ij}(0) = R_{ijkl}(0)e(dx^l)e^*(dx^k)$ has upper Clifford degree $2$. On the other hand, we will be evaluating at $x=y$; so, these connection terms can only contribute when they are differentiated. This suggests the following extension of our notion of Clifford degree.
We say that a differential operator 
$$\sum_{J}b_J(x)\frac{\p^{|J|}}{\p x^J}$$
has total degree (at $0$)
$$\deg_T  \sum_{J}b_J(x)\frac{\p^{|J|}}{\p x^J}  := max\{|J| -|I| + deg^U_c\frac{\partial b_J}{\p x^I}(0) : \frac{\partial b_J}{\p x^I}(0)\not = 0 \}.$$
We see that $\deg_T\nabla_i = 1$, and from Bochner's formula we obtain $\deg_T\Delta_A  = 2.$
Total degree satisfies for endomorphism valued differential operators $A$ and $B$,
$$deg_T(AB) \leq deg_TA + \deg_TB,$$ and 
$$deg_c^U(Aa_0)(x,x) \leq deg_TA.$$
Consequently, 
$$deg_c^Ua_{j,l,J,p}(x,x) \leq 2j,$$
and 
\begin{equation}\label{wind}\tr\,c(dvol)e(\psi)a_{j,l,J,p}(x,x) = 0,\text{ unless }j\geq \frac{n-\deg\,\psi}{2}.\end{equation} 
Next observe that two operations contribute to the coefficient $u^J$ of $a_{j,l,J,p}$. One is 
the $-4\pi iu^k\nabla_k$ term in the construction of $a$. The other is differentiating 
$(4\pi^2t|u|(x)^2-\lambda)^{-1}$. Note $|u|^2(x) = g^{ij}(x)u_iu_j$. Hence 
$\frac{\p}{\p x^k}(4\pi^2t|u|(x)^2-\lambda)^{-1} = -4\pi^2t\frac{\p g^{ij}}{\p x^k}u_iu_j(4\pi^2t|u|(x)^2-\lambda)^{-2}$ has total degree $\leq-1$ since $\frac{\p g^{ij}(0)}{\p x^k} = 0$. Therefore we see that 
$$deg_c^Ua_{j,l,J,p}(x,x) \leq 2j-|J|,$$
as increasing the power of $u$ requires a corresponding decrease in the total degree. This refines (\ref{wind}) to 
\begin{equation}\label{windy}\tr\,c(dvol)e(\psi)a_{j,l,J,p}(x,x) = 0,\text{ unless }j\geq \frac{n+|J|-\deg\,\psi}{2}.\end{equation} 
\begin{proposition}\label{nores}
$\tilde\zeta_{\delta}(s)$ admits an analytic extension to $Re(s) > \frac{\deg(\psi)}{2}$,
and $\zeta_{\delta}(s)$ admits an analytic extension to $Re(s) > \frac{\deg(\phi)}{2}$.
\end{proposition}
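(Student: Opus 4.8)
The plan is to read off the claimed holomorphy from the asymptotic expansion (\ref{asymp})–(\ref{poles}) together with the vanishing bound (\ref{windy}) just established. Recall from (\ref{poles}) that the poles of $\zeta_{A,\delta}(s)$ (and its untwisted and $Spin(7)$ analogues) are located at $s = (n-k)/2$ with residue controlled by the heat coefficient $b_k$, and that by (\ref{pole2}) a pole fails to occur — i.e. the function extends holomorphically past $s = \deg(\psi)/2$ — precisely when $\int_M \tr\, c(dvol)\,e(\psi)\, a_{j,l,J,p}(x,x)\, dV = 0$ for every $(j,l,J,p)$ contributing a power $t^{j+p-n/2-|J|/2}$ with exponent strictly less than $-\deg(\psi)/2$. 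So the whole statement reduces to showing that the integrand vanishes pointwise in exactly that range of exponents.

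The first step is to translate the exponent condition into a constraint on $j$ and $|J|$. A contribution to $p^N_t(x,x)$ carries the power $t^{j+p-n/2-|J|/2}$; feeding this into (\ref{poles}) puts a potential pole at $s = n/2 - j - p + |J|/2 - \dots$, and the requirement that this exceed $\deg(\psi)/2$ is exactly the failure of $j+p-n/2-|J|/2 < -\deg(\psi)/2$. Since $p \geq 0$ and we only care about the worst (most negative) exponents, it suffices to rule out contributions with $j - n/2 - |J|/2 < -\deg(\psi)/2$, i.e. with $2j + \deg(\psi) < n + |J|$, equivalently $j < (n + |J| - \deg\psi)/2$. But this is precisely the complementary range to (\ref{windy}), which asserts that $\tr\, c(dvol)\,e(\psi)\, a_{j,l,J,p}(x,x) = 0$ unless $j \geq (n + |J| - \deg\psi)/2$. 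Hence every term that could produce a pole to the right of $\deg(\psi)/2$ has vanishing integrand, and the integral in (\ref{pole2}) is zero as required. The identity $\Tr \ast e(w) e^{-t\Delta_A} = -\Tr c(dvol)\, e(w) e^{-t\Delta_A}$ from Section~3 is what lets us phrase everything in terms of the Clifford-degree count.

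The remaining step is bookkeeping of the two degenerate cases: the untwisted $G_2$ case (replace $\psi$ by $\phi$, $\deg\phi = 3$, $n=7$, giving holomorphy for $Re(s) > 3/2$) and the $Spin(7)$ case (keep $\psi$, $\deg\psi = 4$, $n = 8$, giving $Re(s) > 2$); in each the same chain of inequalities applies verbatim since (\ref{windy}) was derived with $\psi$ of arbitrary degree and $n$ the manifold dimension, and the instanton hypothesis $P_7 F_A = 0$ was used only to guarantee $[\Delta_A, P_m] = 0$ so that the splitting $\Delta_A = \Delta_{A,7} + \Delta_{A,7j}$ and hence the spectral definition of $\zeta_{A,\delta}$ make sense. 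I expect the only real subtlety — the "hard part" in an otherwise routine argument — to be making sure the bound (\ref{windy}) is applied with the correct value of $|J|$: one must check that the change of variables $u \mapsto u/\sqrt{t}$ producing the exponent $t^{j+p-n/2-|J|/2}$ in (\ref{asymp}) matches the $u^J$-index in (\ref{windy}) with no off-by-one shift, and that odd powers $u^J$ integrate to zero against the Gaussian so they cannot sneak a term into a lower exponent. Once that alignment is confirmed, the proposition follows immediately.
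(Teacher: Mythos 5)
Your argument is correct and is essentially the paper's own proof: the exponent condition $j+p-n/2-|J|/2 < -\deg(\psi)/2$ together with $p\geq 0$ forces $j < (n+|J|-\deg\psi)/2$, so the vanishing statement (\ref{windy}) makes the criterion (\ref{pole2}) hold, and replacing $\psi$ by $\phi$ handles the $G_2$ case. The only difference is that you spell out the inequality chain (and add an unnecessary hedge about the $u$-integration), which the paper leaves implicit in its one-line proof.
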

\begin{proof} Equation (\ref{windy}) implies the criterion of (\ref{pole2}) is satisfied. Replacing $\psi$ with $\phi$ gives the result for $\zeta_{\delta}(s)$.
\end{proof}
We are left to compute the residue at $s=\frac{\deg\psi}{2}.$ This is determined by the coefficient of 
$t^{-\frac{\deg\,\psi}{2}}$ in $p_t^N(x,x).$ From (\ref{asymp}), we see this coefficient is determined by 
$$\int e^{-4\pi^2|u|^2} \sum_{j=0}^N(-1)^j\sum_{l,J,p}'u^J\tr\,c(dvol)e(\psi)a_{j,l,J,p}(x,x)du,$$
where $\sum_{l,J,p}'$ denotes the sum restricted to the set where $n/2+|J|/2-j-p=\frac{\deg\psi}{2}$. The simultaneous solution of the condition $n/2+|J|/2-j-p=\frac{\deg\psi}{2}$ and the condition on $j$ in (\ref{windy}) needed for nonvanishing trace requires $p=0$. Moreover, for nonvanishing trace in the borderline case when equality is be achieved in (\ref{windy}), each term must be maximal total weight. This allows us, in the construction of $p_t^N$ to replace $|u|^2(x)$ by $|u|^2(0)$ without affecting the residue and 
$\Delta_A$ by 
$$L_A:=-(\frac{\p}{\p x^i} - \frac{x^j}{2}R_{ij}(0))^2 - e(dx^i)e^*(dx^j)(R_{ij}(0) + F_{ij}(0))$$
$$= - \frac{\p^2}{(\p x^i)^2} + R_{ij}(0)x^j\frac{\p}{\p x^i} - \sum_{j,k,i}\frac{x^jx^k}{4}R_{ij}(0)R_{ik}(0) - e(dx^i)e^*(dx^j)(R_{ij}(0) + F_{ij}(0)).$$
Moreover, the Clifford algebra commutator $[c(f_1),c(f_2)]$, of two $2-$forms $f_1$ and $f_2$ is again given by Clifford multiplication by a two form (possibly zero). Hence this commutator (unlike the commutator between differential operators and polynomials) always reduces the total degree. Hence, in computing the residue we may discard all these Clifford commutator terms. This is the same as replacing Clifford multiplication by exterior multiplication in $L_A$. Thus discarding all terms in $L_A$ of total weight less than $2$, we are left to compute the heat kernel for 
$$\hat L_A:= - \frac{\p^2}{(\p x^i)^2} + R_{ij}(0)x^j\frac{\p}{\p x^i} - \sum_{j,k,i}\frac{x^jx^k}{4}R_{ij}(0)R_{ik}(0)$$
$$ 
- \frac{1}{4} dx^i\wedge dx^j R_{ijkl}(0)\hat c(dx^l)\hat c(dx^k) - \frac{1}{4}dx^i\wedge dx^jF_{ij}(0).$$

We may now follow \cite{BGV} and use Mehler's formula for the heat kernel of $\hat L_A$. 

Let $Q$ denote the (even parity) differential form valued symmetric matrix 
$$Q_{jk} := -\sum_{i}\frac{1}{4}\hat R_{ij}(0)\hat R_{ik}(0),$$
where $\hat R_{ij}(0)$ denotes the two form 
$\frac{1}{4}R_{ijkl}dx^k\wedge dx^l$ obtained by taking the component of $R_{ij}$ of clifford degree 2 and then replacing clifford by exterior multiplication. 

Mehler's formula gives  
$$e^{-t(\Delta + Q_{jk}x^jx^k)}(x,y)$$
$$ =  det(\frac{Q^{1/2}}{2\pi\sinh(2tQ^{1/2})})^{1/2}\exp[-((\frac{Q^{1/2}}{2\tanh(2tQ^{1/2})})_{ij}(x^ix^j + y^iy^j)- (\frac{Q^{1/2}}{\sinh(2tQ^{1/2})})_{ij}x^iy^j)].$$
The right hand side is an even analytic function of $Q^{1/2}$ and hence may be defined without actually defining a square root of $Q$.  Understanding $ R_{ij}(0)x^j\frac{\p}{\p x^i} $ as an infinitesimal rotation and substituting into the preceding then gives 

$$e^{-t\hat L_A}(x,y) =  e^{- \frac{1}{2}R_{ij}(0)x^iy^j}det(\frac{Q^{1/2}}{2\pi\sinh(2tQ^{1/2})})^{1/2}\exp[-(\frac{Q^{1/2}}{2\tanh(2tQ^{1/2})})_{ij}(x-y)^i(x-y)^j]$$
$$\times \exp[\frac{t}{4}(  dx^i\wedge dx^j R_{ijkl}(0)\hat c(dx^l)\hat c(dx^k) + 2F(0))].$$

In particular, 
$$e^{-t\hat L_A}(0,0) =  det(\frac{Q^{1/2}}{2\pi\sinh(2tQ^{1/2})})^{1/2}\exp[\frac{t}{4}(  dx^i\wedge dx^j R_{ijkl}(0)\hat c(dx^l)\hat c(dx^k) + 2F(0))].$$
Hence 
$$\tr\,\ast e(\psi)p_t(x,x)dvol =  (\psi\wedge det(\frac{Q^{1/2}}{2\pi\sinh(2tQ^{1/2})})^{1/2}\exp[\frac{t}{4}(  dx^i\wedge dx^j R_{ijkl}(0)\hat c(dx^l)\hat c(dx^k) + 2F(0))])_n,$$
where $(f)_n$ denotes the component of $f$ of degree $n$. 

Following [BGV] chapter 4, we write this in our situation (with $n=7$ or $8$ and $\deg\,\psi\geq 3$) as 

\begin{equation}\label{topres} \tr\,\ast e(\psi)p_t(x,x)dvol = (\pi t)^{-deg(\psi)/2}\psi\wedge (\frac{1}{3}p_1(TM) +  c_1^2(E)-c_2(E))+ 
O(t^{(1-deg(\psi))/2}).\end{equation}
Consequently we have the following proposition. 
\begin{proposition}\label{btopres}
\begin{equation}
b_{n-\deg(\psi)} = \pi^{-deg(\psi)/2}\int_M \psi\wedge (\frac{1}{3}p_1(TM) +  c_1^2(E)-c_2(E)).
\end{equation}
\end{proposition}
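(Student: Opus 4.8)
The plan is to extract the coefficient of $t^{-\deg(\psi)/2}$ in $\Tr \ast e(\psi) e^{-t\Delta_A}$ from the Mehler-formula computation already set up, and to match it against a generating function whose degree-$n$ component is $\psi \wedge (\frac{1}{3}p_1(TM) + c_1^2(E) - c_2(E))$. By the reductions in Section~\ref{asympsec}, the residue at $s = \deg(\psi)/2$ depends only on the term $e^{-t\hat L_A}(0,0)$, integrated against $\ast e(\psi)$ and $dvol$, and by the earlier algebraic observations (Lemma~\ref{vanlem} together with the Clifford-degree bookkeeping leading to (\ref{windy})) only the top-Clifford-degree part survives the trace, which is exactly why Clifford multiplication was legitimately replaced by exterior multiplication in passing to $\hat L_A$. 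So the computation really is the standard local-index-theorem computation of \cite{BGV}, Chapter~4, and the first step is simply to record the identity
$$\tr\,\ast e(\psi)\,p_t(x,x)\,dvol = \bigl(\psi \wedge \det\bigl(\tfrac{Q^{1/2}}{2\pi \sinh(2tQ^{1/2})}\bigr)^{1/2}\exp[\tfrac{t}{4}(dx^i\wedge dx^j R_{ijkl}(0)\hat c(dx^l)\hat c(dx^k) + 2F(0))]\bigr)_n$$
already displayed, where $(f)_n$ is the degree-$n$ component.

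Next I would rescale. Replacing $t$ by $t$ and tracking the form-degree weight: $Q$ is built from $\hat R_{ij}(0)$, a $2$-form, so $tQ^{1/2}$ carries form-degree weight $1$ per power of $t$, and similarly the curvature terms $tF(0)$ and $t\,R_{ijkl}(0)\hat c(dx^l)\hat c(dx^k)$ are weight-one. Hence when we expand the right-hand side in $t$ and pick out the degree-$n$ form component, a degree-$k$ form is accompanied by $t^{k/2}$ up to the overall $t^{-n/2}$ from the heat-kernel normalization $\det(\frac{Q^{1/2}}{2\pi\sinh(2tQ^{1/2})})^{1/2} = (4\pi t)^{-n/2}(1 + O(t))$ with $t$-independent leading coefficient after the form-weight rescaling. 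Wedging with $\psi$ (degree $\deg\psi$) and demanding total degree $n$ forces the curvature part to contribute a form of degree $n - \deg\psi$, hence a factor $t^{(n-\deg\psi)/2}$, and combined with the $(4\pi t)^{-n/2}$ this gives the claimed $t^{-\deg(\psi)/2}$; the coefficient of that power is, by the standard Chern--Weil identification in \cite{BGV}, precisely $\pi^{-\deg(\psi)/2}$ times the degree-$(n-\deg\psi)$ component of $\hat{A}(TM)^{?}$-type expression — more precisely, after the rescaling the curvature part $\det(\tfrac{Q^{1/2}}{\sinh})^{1/2}\exp[\cdots]$ reduces, at the relevant low form-degree, to $1 + \tfrac{1}{24\pi^2}p_1(TM)\cdot(\text{const}) + \cdots$ together with the bundle Chern-character contribution $\ch(E) = \rk E + c_1(E) + \tfrac12(c_1^2(E) - 2c_2(E)) + \cdots$, whose degree-$4$ part is exactly $\tfrac12(c_1^2(E) - 2c_2(E))$. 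Collecting the degree-$4$ terms produces $\tfrac13 p_1(TM) + c_1^2(E) - c_2(E)$ after the numerical normalization is done, which is the content of (\ref{topres}).

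Finally, integrating (\ref{topres}) over $M$ against $dvol$ and reading off the coefficient of $t^{-\deg(\psi)/2}$ gives $b_{n-\deg(\psi)} = \pi^{-\deg(\psi)/2}\int_M \psi \wedge (\tfrac13 p_1(TM) + c_1^2(E) - c_2(E))$, which is the assertion of Proposition~\ref{btopres}; the $O(t^{(1-\deg\psi)/2})$ remainder integrates to something contributing only to lower-order poles, which have already been shown absent in the relevant range by Proposition~\ref{nores}. The one genuinely delicate point — and the step I expect to be the main obstacle to write cleanly — is the normalization bookkeeping: keeping track of the powers of $2$, $\pi$, and $i$ through the Fourier convention $e^{2\pi i (x-y)\cdot u}$, the $4\pi^2$ in the symbol, the $\tfrac14$ in front of the curvature terms in $\hat L_A$, and the form-degree rescaling, so that the final constant comes out as stated (and so that the $\tfrac13$ in front of $p_1$ — rather than the more familiar $-\tfrac{1}{24\pi^2}$ normalization of the first Pontryagin \emph{form} — is correctly accounted for by the fact that we are extracting a \emph{residue} rather than a heat coefficient, and by the factor $-1$ coming from $\ast e(w) = -c(dvol)e(w)$). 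Everything else is a direct citation of the Mehler-formula computation in \cite{BGV}.
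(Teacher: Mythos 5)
Your proposal follows essentially the same route as the paper: use the reduction to $\hat L_A$ and Mehler's formula already set up, extract the $t^{-\deg(\psi)/2}$ coefficient by form-degree weight counting against the $(4\pi t)^{-n/2}$ normalization, and identify the degree-$(n-\deg\psi)$ curvature component as $\frac{1}{3}p_1(TM)+c_1^2(E)-c_2(E)$ via the Chern--Weil normalization of \cite{BGV}, Chapter 4, then integrate over $M$. The numerical bookkeeping you flag as delicate is likewise deferred to \cite{BGV} in the paper itself, so your level of detail matches the original argument.
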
 

This completes our proof of Theorems \ref{th1}, \ref{th2}, and \ref{th3}.

%
 \newcommand{\etalchar}[1]{$^{#1}$}
\def\polhk#1{\setbox0=\hbox{#1}{\ooalign{\hidewidth
  \lower1.5ex\hbox{`}\hidewidth\crcr\unhbox0}}}
\providecommand{\bysame}{\leavevmode\hbox to3em{\hrulefill}\thinspace}

%
\end{document}